\documentclass[preprint, 10p]{elsarticle}
\usepackage[utf8]{inputenc}
\usepackage[left=2cm,right=2cm,top=2cm,bottom=2cm]{geometry}

\usepackage{lineno,hyperref}
\modulolinenumbers[5]

\usepackage{amsmath}
\usepackage{amsthm}
\usepackage{amsfonts}
\usepackage{amssymb}
\usepackage{array}
\usepackage{caption}
\usepackage{enumitem}
\usepackage{framed}
\usepackage{graphicx}
\usepackage{pdfpages}
\usepackage{subcaption}
\usepackage{wrapfig}
\usepackage{xcolor}

\newtheorem{thm}{Theorem}
\newtheorem{lem}[thm]{Lemma}

\newdefinition{rmk}[thm]{Remark}
\newdefinition{defn}[thm]{Definition}
\newdefinition{exmp}[thm]{Example}

\journal{Applied Mathematical Letters}

\newcommand{\abs}[1]{\left\vert#1\right\vert}			
\newcommand{\gu}{\mathrm{u}}

\newcommand{\gw}{\mathrm{w}}
\newcommand{\gz}{\mathrm{z}}

\def\mod{\mathop\mathrm{mod}\nolimits}
\newcommand{\Real}{\mathbb{R}}

\usepackage{multirow}

\usepackage{natbib}
\setlength{\bibsep}{0.0pt}

\newcommand{\tabEntry}[2]{$#1(#2)$}


\usepackage{etoolbox}
\apptocmd\normalsize{%
 \abovedisplayskip=4pt
 \abovedisplayshortskip=3pt
 \belowdisplayskip=4pt
 \belowdisplayshortskip=6pt
}{}{}


\usepackage{enumitem}
\setlist{nosep}

\begin{document}

\begin{frontmatter}
\title{Counting and ordering periodic stationary solutions of lattice Nagumo equations}

\author[1]{Hermen Jan Hupkes}
\ead{hhupkes@math.leidenuniv.nl}

\author[1]{Leonardo Morelli}
\ead{l.morelli@math.leidenuniv.nl}

\author[2]{Petr Stehl\'{i}k\corref{cor1}}
\ead{pstehlik@kma.zcu.cz}

\author[2]{Vladim\'{i}r \v{S}v\'{i}gler}
\ead{sviglerv@kma.zcu.cz}

\cortext[cor1]{Corresponding author.}

\address[1]{Mathematisch Instituut, Universiteit Leiden, P.O. Box 9512, 2300 RA Leiden, The Netherlands}
\address[2]{Department of Mathematics and NTIS, University of West Bohemia, Univerzitn\'{i} 8, 30100 Pilsen, Czech Republic}

\begin{abstract}
We study the rich structure of periodic stationary solutions of Nagumo reaction diffusion equation on lattices. By exploring 
the relationship with Nagumo equations on cyclic graphs we are able to 
divide these periodic solutions into equivalence classes
that can be partially ordered and counted. In order to accomplish this,
we use combinatorial
concepts such as necklaces, bracelets and Lyndon words.
\end{abstract}

\begin{keyword}
reaction diffusion equation \sep lattice differential equation \sep graph differential equations \sep periodic solutions \sep travelling waves \sep necklaces

\MSC 34A33 \sep 37L60 \sep 39A12 \sep 65M22

\end{keyword}

\end{frontmatter}

\section{Introduction}
\noindent In this paper we explore the structure of periodic stationary solutions of the lattice Nagumo equation
\begin{align}
\dot{u}_i(t) &= d(u_{i-1}(t) - 2 u_i(t) + u_{i+1}(t))+g(u_i(t);a),\quad i\in\mathbb{Z}, \quad t\in\mathbb{R}. \label{eq:lde} \tag{LDE}
\end{align}
We assume $d>0$ and consider the cubic bistable nonlinearity $g(u;a):=u(1-u)(u-a)$, $a\in(0,1)$. This equation has been extensively studied as the simplest model describing the competition between two stable states $u=0$ and $u=1$ in a spatially discrete environment. One of its key features is the existence of nondecreasing travelling waves $u_j(t) = \Phi(j- ct)$, and the fact that these waves do not move ($c=0$) for small values of $d$. This phenomenon (called pinning) is caused by the existence of heterogeneous stationary solutions, which prevent the dominance of the two stable homogeneous states $u=0$ and $u=1$. Our goal is to show that the periodic stationary solutions of \eqref{eq:lde}, which exist mainly inside the pinning region, form equivalence classes that can be partially ordered and counted. 

Equation \eqref{eq:lde} is a discrete-space version of the famous Nagumo reaction-diffusion PDE  
$u_t = d u_{xx} + g (u; a)$, with $x \in \mathbb{R}.$ The lattice counterpart \eqref{eq:lde} has a richer set of equilibria \cite{VL30} which in turn implies more complex behaviour of travelling and standing front solutions \cite{MPB, VL50}. Mallet-Paret \cite{MPB} established that for each $a \in [0,1]$ and $d > 0$ there exists a unique $c=c(a,d)$ for which the wave $\Phi$ exists. However, if we fix $a \in (0,1) \setminus \{\frac{1}{2} \}$, Zinner \cite{VL50} showed that $c(a , d) \neq 0$ for $d \gg 1$ and
 Keener \cite{VL28} proved that $c(a, d)= 0$ for $0 < d \ll 1$. Moreover, for fixed $d>0$ the results in \cite{HOFFMPcrys} suggest the existence of $\delta(d)>0$ so that $c(a ,d) = 0$ whenever $\abs{a - \frac{1}{2}} \le \delta(d)$. This above mentioned pinning is typical for lattice equations \cite{EVV2005AppMath, Guo2011, HJHVL2005}. Since the pinning region is dominated by heterogeneous (periodic and aperiodic) stationary solutions, our paper contributes to the understanding of this important feature (see Fig.~\ref{f:regions} for a simple illustration).

A second important motivation 
for understanding the
periodic stationary solutions of \eqref{eq:lde} is that this knowledge
aids us in the search for
so-called multichromatic waves.
These non-monotone traveling waves connect two or more $n$-periodic stationary solutions of \eqref{eq:lde} (in contrast to standard monochromatic waves which are monotone). In our companion papers \cite{bichrom, mchrom} we have shown that these waves exist mainly inside the pinning region, 
appearing and disappearing as $d$ increases. 
The waves that exist outside of the pinning region
can be combined to form
complex collision waves that involve direction changes.

In this paper we name, partially order and count the equivalence classes of  periodic stationary solutions of \eqref{eq:lde} based on the  connection between \eqref{eq:lde} and the Nagumo  equation posed on cyclic graphs. For an arbitrary undirected graph $\mathcal{G} = (V,E)$ with the set of vertices $V = \{ 1,2,3, \ldots, n \}$ and a set of edges $E$, the Nagumo equation on a graph $\mathcal{G}$ is\footnote{We use italic letters for double sequences (e.g., $u$ for solutions of \eqref{eq:lde}) and roman ones for vectors (e.g., $\gu$ for solutions of \eqref{eq:gde}).}
\begin{align}
\dot{\gu}_i(t) & = d \sum_{j \in \mathcal{N}(i)} \big(\gu_j(t)-\gu_i(t)\big) + g(\gu_i(t);a),\quad i\in V,t\in\mathbb{R},
\label{eq:gde} \tag{GDE}
\end{align}
where $\mathcal{N}(i)$ denotes the 1-neighbourhood of vertex $i \in V$.

In $\S$\ref{sec:tw-2} we establish the connection between the stationary solutions of  \eqref{eq:gde} and the periodic stationary solutions of \eqref{eq:lde}. In $\S$\ref{sec:naming} we use this connection and the implicit function theorem to build a naming scheme for periodic stationary solutions of \eqref{eq:lde}. In  $\S$\ref{sec:symmetries} we discuss their symmetries, which allows us to define and count their equivalence classes in $\S$\ref{sec:counting}.
This is achieved by establishing a link with
combinatorial concepts such as necklaces, bracelets and Lyndon words.
Our main result is formulated in Theorem \ref{t:counting}
and illustrated by simple examples.


\begin{center}
\begin{figure}
\centering
\begin{minipage}{.4\textwidth}
    \centering
    \includegraphics[width=\textwidth]{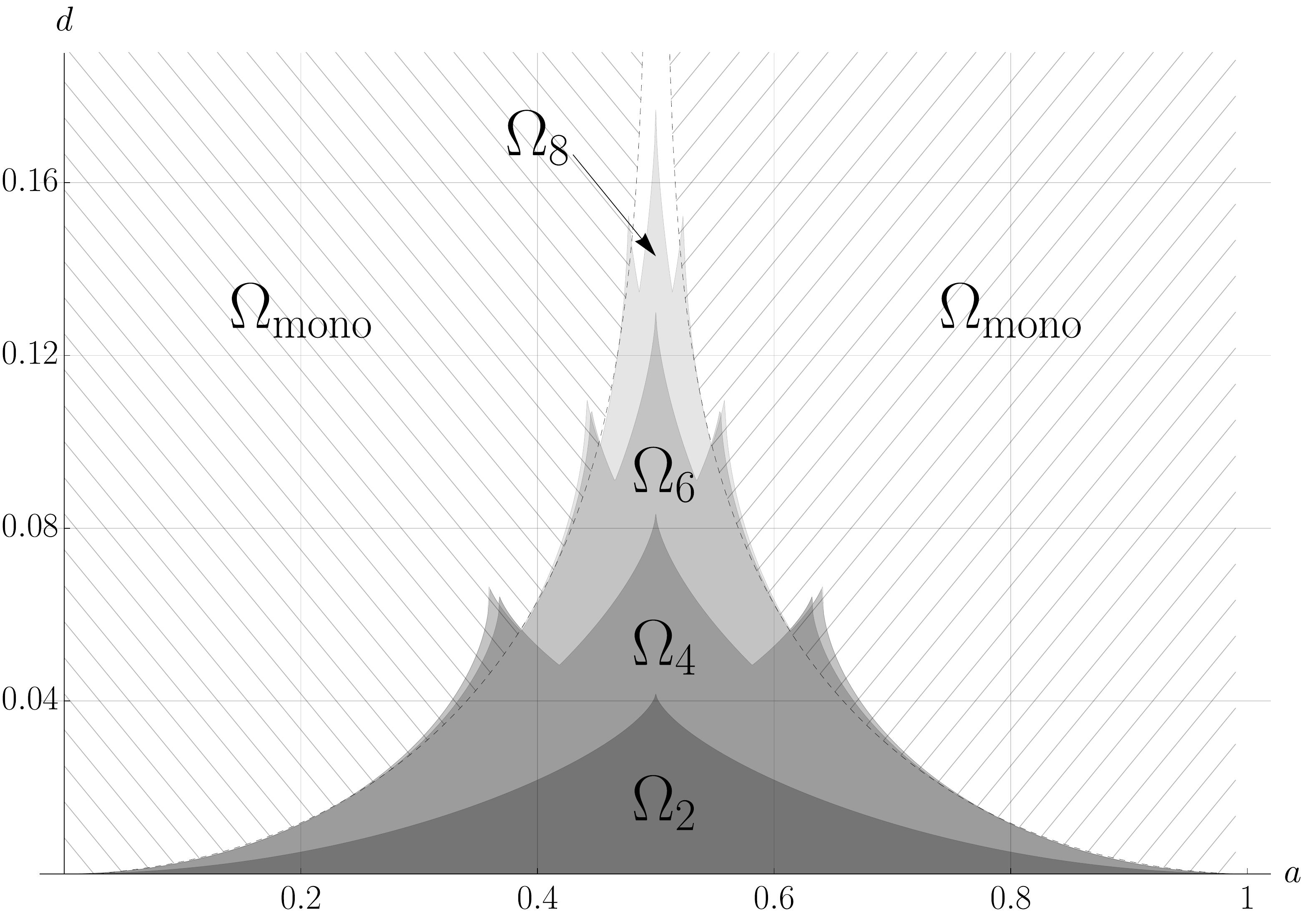}
\end{minipage}
\begin{minipage}{.4\textwidth}
    \centering
    \includegraphics[width=\textwidth]{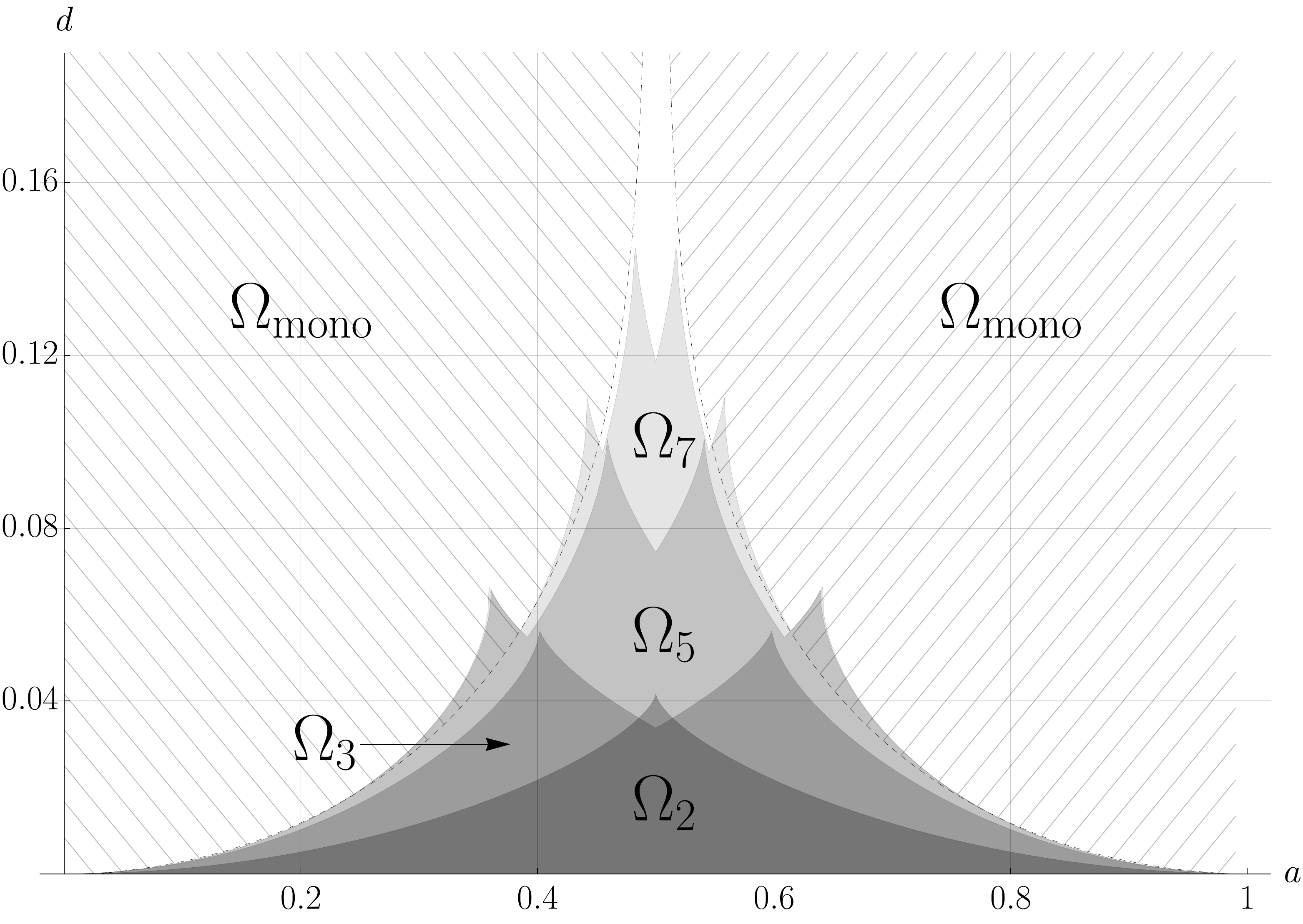}
\end{minipage}
\caption{Existence regions $\Omega_n$ of stable $n$-perioidic stationary solutions of \eqref{eq:lde}. The left panel depicts even periods, the right one odd periods. The hatched regions $\Omega_{\text{mono}}$ correspond to parameter values for which the monochromatic waves of \eqref{eq:lde} travel.}\label{f:regions}
\end{figure}
\end{center}


\section{Periodic solutions and solutions of graph Nagumo equation} \label{sec:tw-2}
\noindent We here consider
$\mathcal{G} = \mathcal{C}_n$, where $\mathcal{C}_n$ is a cycle graph on $n$ vertices.
The equation~\eqref{eq:gde} can now be written as $\dot{\gu}(t) = G(\gu(t);a,d)$, where $G:\mathbb{R}^n\rightarrow\mathbb{R}^n$ is given by
\begin{equation}\label{eq:G}
G(\gu;a,d) := \begin{pmatrix}
d (\gu_n-2\gu_1+\gu_2) + g\big(\gu_1; a\big) \\
d (\gu_1-2\gu_2+\gu_3) + g\big(\gu_2; a\big) \\
\vdots \\
d (\gu_{n-1}-2\gu_n+\gu_1) + g\big(\gu_n; a\big)
\end{pmatrix} \, .
\end{equation}
Our key results are based on the correspondence of stationary solutions of \eqref{eq:gde} on $\mathcal{G}=\mathcal{C}_n$ and periodic stationary solutions of \eqref{eq:lde}\footnote{Additionally, it is well-known that the problem of finding stationary solutions of graph differential equations on cycles $\mathcal{G}=\mathcal{C}_n$ is actually equivalent to periodic discrete boundary value problems \cite{Stehlik2017a, Volek2016}.}. We say that a double sequence $u=(u_i)_{i\in\mathbb{Z}}$ is a periodic extension of a vector $\gu\in\mathbb{R}^n$ if $u_i = \gu_{\mod(i,n)}$ (we assume that the modulo operator takes values    $\mod(i,n) \in \{1, \ldots , n \}$).
We remark that \eqref{eq:lde} is well-posed
as an evolution equation on the space $\ell^\infty(\mathbb{Z};\Real)$.
However, we caution
the reader that lattice equations do not necessarily have unique solutions
if one drops this boundedness condition,
even in the linear case \cite{Chen2008, Slavik2015}.

\begin{lem}\label{l:tw-equivalence}
Let $\mathcal G=\mathcal C_n$, $n\geq 3$, be a cycle graph on $n$ vertices. 
The vector $\gu=(\gu_1,\gu_2,\ldots,\gu_n)$ is a stationary solution of \eqref{eq:gde} on $\mathcal G=\mathcal C_n$ if and only if its periodic extension $u$ is an $n$-periodic stationary solution of \eqref{eq:lde}. Moreover, $\gu$ is an asymptotically stable solution of \eqref{eq:gde} if and only if $u$ is an asymptotically stable solution of \eqref{eq:lde} with respect to the $\ell^\infty$-norm.
\end{lem}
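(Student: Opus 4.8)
\emph{Proof proposal.} The statement has two halves — a correspondence of equilibria and a correspondence of their asymptotic stability — and I would prove them in that order. For the first half the plan is a direct substitution: if $\gu\in\Real^n$ satisfies $G(\gu;a,d)=0$ with $G$ as in \eqref{eq:G}, then for the periodic extension $u$ (so $u_j=\gu_{\mod(j,n)}$) the $i$-th stationary equation of \eqref{eq:lde}, $d(u_{i-1}-2u_i+u_{i+1})+g(u_i;a)=0$, is literally the $\mod(i,n)$-th component of $G(\gu;a,d)=0$; here the hypothesis $n\geq 3$ is exactly what makes the neighbours of the vertex $\mod(i,n)$ in $\mathcal C_n$ equal to $\mod(i-1,n)$ and $\mod(i+1,n)$, with no coincidences or multiple edges. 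Conversely one restricts an $n$-periodic stationary solution of \eqref{eq:lde} to the indices $\{1,\dots,n\}$. The same computation carried out for the time-dependent equations gives the stronger fact that $t\mapsto\gu(t)$ solves \eqref{eq:gde} on $\mathcal C_n$ if and only if its periodic extension solves \eqref{eq:lde}; this, combined with the well-posedness of \eqref{eq:lde} on $\ell^\infty(\mathbb{Z};\Real)$ recalled above, is the bridge I will use for the stability claims.

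For the direction ``$u$ asymptotically stable $\Rightarrow\gu$ asymptotically stable'' I would argue as follows. Given $\gv^0\in\Real^n$, its periodic extension $v^0$ has $\|v^0-u\|_{\ell^\infty}=\|\gv^0-\gu\|_\infty$ (the sup norm on $\Real^n$), and by uniqueness of the \eqref{eq:lde}-flow together with the time-dependent correspondence above, the \eqref{eq:lde}-solution issued from $v^0$ is at all times the periodic extension of the \eqref{eq:gde}-solution $\gv(t)$ issued from $\gv^0$; in particular $\|v(t)-u\|_{\ell^\infty}=\|\gv(t)-\gu\|_\infty$ for all $t\geq 0$. Feeding the $\ell^\infty$-stability and $\ell^\infty$-attractivity estimates for $u$ through this identity yields the corresponding estimates for $\gu$. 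No spectral information is needed in this direction.

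The substantive direction is ``$\gu$ asymptotically stable $\Rightarrow u$ asymptotically stable'', where one must handle perturbations of $u$ that are not periodic. Here I would invoke the comparison principle for \eqref{eq:lde}, valid because $d>0$ makes the lattice system cooperative: ordered initial data produce ordered solutions. Writing $\mathbf 1$ for the constant sequence equal to $1$, any $v^0\in\ell^\infty$ with $\|v^0-u\|_{\ell^\infty}<\delta$ satisfies $u-\delta\mathbf 1\leq v^0\leq u+\delta\mathbf 1$ pointwise, and $u\pm\delta\mathbf 1$ are the periodic extensions of $\gu\pm\delta\mathbf 1\in\Real^n$. By the correspondence the \eqref{eq:lde}-solutions from $u\pm\delta\mathbf 1$ remain, for all times, the periodic extensions of the \eqref{eq:gde}-solutions $\gu^{+}(t)$, $\gu^{-}(t)$ issued from $\gu\pm\delta\mathbf 1$, and by the comparison principle the \eqref{eq:lde}-solution $v(t)$ from $v^0$ is squeezed between them at every lattice site, whence $\|v(t)-u\|_{\ell^\infty}\leq\max\{\|\gu^{+}(t)-\gu\|_\infty,\|\gu^{-}(t)-\gu\|_\infty\}$ for all $t\geq 0$. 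Since $\gu$ is asymptotically stable for \eqref{eq:gde}, the right-hand side is uniformly small for small $\delta$ and tends to $0$ as $t\to\infty$ once $\delta$ is small enough that $\gu\pm\delta\mathbf 1$ lie in the basin of attraction of $\gu$; this gives the asymptotic stability of $u$ for \eqref{eq:lde}.

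The bookkeeping in the first step is routine once the role of $n\geq 3$ is made explicit. The two ingredients that actually carry weight are (i) well-posedness and, in particular, the comparison principle for \eqref{eq:lde} on $\ell^\infty(\mathbb{Z};\Real)$, which is standard for the lattice Nagumo equation but worth stating, and (ii) the observation that an arbitrary small $\ell^\infty$-perturbation of the periodic profile $u$ can be trapped between two \emph{periodic} perturbations, which is exactly what lets us transfer stability from the finite cycle $\mathcal C_n$ back to the infinite lattice. I expect the squeezing step of the last paragraph to need the most care, since the bound on $v(t)-u$ must be controlled in the full $\ell^\infty$-norm, uniformly over all of $\mathbb{Z}$ and not merely over a fundamental domain.
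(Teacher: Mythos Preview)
Your proposal is correct and follows essentially the same route as the paper: a direct inspection for the equilibrium correspondence, and for the substantive stability direction the identical squeezing argument via the comparison principle, trapping an arbitrary $\ell^\infty$-perturbation between the periodic extensions of $\gu\pm\delta\mathbf 1$ (the paper cites \cite[Lemma 1]{Chen2008} for this). Your write-up is in fact more careful than the paper's in two respects: you make the role of $n\geq 3$ explicit, and you spell out the easy direction and the Lyapunov-stability half of asymptotic stability, whereas the paper only exhibits the attraction and dismisses the converse with ``similarly''.
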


%

\begin{proof}
A short inspection readily yields the desired equivalence between solutions
of \eqref{eq:lde} and \eqref{eq:gde}\footnote{We omit the case of $n=2$. In this case, a slightly modified version of Lem.~\ref{l:tw-equivalence} holds. The reduced version of \eqref{eq:G} for $n=2$
\[
G(\gu;a,d) := \begin{pmatrix}
d (\gu_2-2\gu_1+\gu_2) + g\big(\gu_1; a\big) \\
d (\gu_1-2\gu_2+\gu_1) + g\big(\gu_2; a\big) \\
\end{pmatrix} = 
\begin{pmatrix} 
2d (\gu_2-\gu_1) + g\big(\gu_1; a\big) \\
2d (\gu_1-\gu_2) + g\big(\gu_2; a\big) \\
\end{pmatrix}
\, .
\]
implies that solutions of \eqref{eq:lde} and \eqref{eq:gde} are equivalent if one considers the double value of $d$ in \eqref{eq:lde}.
}. Turning to their stability,
%
%
let us assume that  $\gu^*=(\gu_1^*,\gu_2^*,\ldots,\gu_n^*)$ is an asymptotically stable solution of \eqref{eq:gde}. 
There hence exists
$\gamma>0$ such that for each $\gu_0\in\mathbb{R}^n$ with $\|\gu_0-\gu^* \|<\gamma$ we have
\[
\lim_{t\rightarrow\infty} \gu(t,\gu_0) = \gu^*,
\]
in which $\gu(t,\gu_0)$ denotes the solution of \eqref{eq:gde} 
with the initial condition $\gu_0$. Consequently, there exists $\delta>0$ so that the vectors $\gw_0, \gz_0 \in\mathbb{R}^n$ 
defined by
\[
(\gw_0)_i = \gu^*_i+\delta, \quad (\gz_0)_i=\gu^*_i-\delta \quad \text{for all } i=1,2,\ldots,n,
\]
satisfy $
\lim_{t\rightarrow\infty} \gw(t,\gw_0) = \gu^*,\quad \lim_{t\rightarrow\infty} \gz(t,\gz_0) = \gu^*.
$

Let us now consider the periodic extensions $u^*$, $w_0$ and $z_0$ of 
the vectors $\gu^*$, $\gw_0$ and $\gz_0$. Then the 
corresponding 
solutions $w(t,w_0)$, $z(t,w_0)$
of \eqref{eq:lde} satisfy
\[
w_i(t,w_0)=\gw_{\mod(i,n)}(t,\gw_0),\quad z_i(t,w_0)=\gz_{\mod(i,n)}(t,\gz_0),
\]
for each $t \ge 0$, which implies
\[
\lim_{t\rightarrow\infty} w(t,w_0) = u^*,\quad \lim_{t\rightarrow\infty} z(t,z_0) = u^*.
\]
Using the comparison principle (e.g., Chen et al. \cite[Lemma 1]{Chen2008}) we can hence conclude 
that all 
solutions $u$ of \eqref{eq:lde} with an initial condition $u_0$ 
that satisfies 
$\| u_0-u^* \|_\infty < \delta $ 
indeed have 
$ \lim_{t\rightarrow\infty} u(t,u_0) = u^*,$ since
\[
u^* \leftarrow z(t,z_0) \leq u(t,u_0) \leq w(t,w_0) \rightarrow u^*.
\]
The opposite implication can be proved similarly.
\end{proof}

%


\section{Naming scheme for stationary periodic solutions}\label{sec:naming}
\noindent The equivalence between $n$-periodic solutions of \eqref{eq:lde} and solutions of \eqref{eq:gde} on $\mathcal{G}=\mathcal{C}_n$ (see Lem.~\ref{l:tw-equivalence}) allows us to focus on the latter
in order to establish our naming scheme 
for the former solutions.
First, let us observe that $G(\gu; a, 0) = 0$ for any $a \in (0,1)$ and $\gu \in \{0, a, 1\}^n$. Moreover, the fact that
\begin{equation}
\label{eq:mcr:diag:matrix:d1g}
    D_1 G (\gu  ; a, 0) = \mathrm{diag}\Big( g'( \gu_1; a \big), \ldots , g'\big(\gu_n; a \big) \Big)
\end{equation}
has non-zero entries allows us to employ the implicit function theorem 
to conclude 
that there are $3^n$ solution branches emanating out of the 
roots $\{0, a, 1\}^n$ for $d$ small. These branches can be tracked up until the first collision with another branch. This justifies the use of the following naming scheme for $n$-periodic stationary solutions.

We introduce an alphabet $A=\{\mathfrak{0} , \mathfrak{a}, \mathfrak{1} \}$ and call $\gu_\gw\in [0,1]^n$ a stationary solution  of type $\gw \in A^n = \{\mathfrak{0} , \mathfrak{a}, \mathfrak{1} \}^n$ if it satisfies
$G(\gu_\gw; a, d) = 0$ and lies on the branch
emanating from the root 
$\gw_a$ at $d=0$,
where $\gw_a: \{\mathfrak{0} , \mathfrak{a}, \mathfrak{1} \}^n \rightarrow \{0, a, 1\}^n$ is defined by
\[
\left(\gw_a \right)_i = \left\{ \begin{array}{lcl} 
                             0 & & \hbox{if } \gw_i = \mathfrak{0}, \\
                             a & & \hbox{if } \gw_i = \mathfrak{a},  \\
                             1 & & \hbox{if } \gw_i = \mathfrak{1} .
                         \end{array}
                        \right.
\]

Using this definition we introduce connected sets 
\begin{equation}
\Omega_{\gw} = \{ ( a, d) \in \mathcal{H} : \hbox{the system } G
  (\cdot \, ;a, d) = 0 \hbox{ admits an equilibrium of type } \gw \},\label{eq:Omega}
\end{equation}
which are open in the half-strip $\mathcal{H} =  [0,1] \times [0, \infty)$. While a full analysis of the sets $\Omega_{\gw}$ can be very tricky (for a fixed $a\in(0,1)$, solutions of type $\gw$ can disappear and then reappear, see \cite{mchrom}), we are only interested in small values of $d$ here in this paper.

\begin{lem}\label{l:3n2n}
Let $a\in(0,1)$ and $d>0$ be small enough. Then \eqref{eq:lde} has $3^n$ stationary $n$-periodic solutions. 
These solutions are asymptotically stable
if and only if they belong to the $2^n$ solutions 
of type $\gw \in \{\mathfrak{0},\mathfrak{1}\}^n$. 
\end{lem}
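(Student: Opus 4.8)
The plan is to combine the implicit function theorem (IFT) counting argument sketched just before the statement with a perturbation analysis of the linearization $D_1 G(\gu;a,d)$ near $d=0$. First I would make the counting precise: since $G(\gu;a,0)=0$ exactly on $\{0,a,1\}^n$ and $D_1 G(\gu;a,0)$ is the diagonal matrix in \eqref{eq:mcr:diag:matrix:d1g} with entries $g'(\gu_i;a)$, which is invertible because $g'(0;a)=-a<0$, $g'(a;a)=a(1-a)>0$ and $g'(1;a)=-(1-a)<0$ are all nonzero, the IFT produces, for each $\gw\in A^n$, a unique $C^1$ branch $d\mapsto \gu_\gw(d)$ with $\gu_\gw(0)=\gw_a$, defined on a neighbourhood of $d=0$. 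Taking $d$ below the minimum of the (finitely many) radii of these $3^n$ neighbourhoods, and using that distinct roots at $d=0$ stay distinct for small $d$, gives exactly $3^n$ distinct stationary $n$-periodic solutions of \eqref{eq:lde} via Lemma~\ref{l:tw-equivalence}; I would also note there are no others, since any stationary solution lies in $[0,1]^n$ (a standard invariant-region/maximum-principle argument) and must, for $d$ small, be close to a zero of $G(\cdot;a,0)$, hence on one of the branches.

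For the stability dichotomy I would invoke Lemma~\ref{l:tw-equivalence}, which reduces asymptotic stability of the $n$-periodic solution $u_\gw$ of \eqref{eq:lde} to asymptotic stability of $\gu_\gw$ as an equilibrium of the finite ODE system \eqref{eq:gde} on $\mathcal C_n$, and then analyze the spectrum of $D_1 G(\gu_\gw(d);a,d)$. Write $D_1 G(\gu;a,d) = -d L + \mathrm{diag}(g'(\gu_i;a))$, where $L$ is the (symmetric, positive semidefinite) graph Laplacian of $\mathcal C_n$. At $d=0$ the eigenvalues are exactly $g'((\gw_a)_i;a)$, $i=1,\dots,n$. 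If $\gw\in\{\mathfrak 0,\mathfrak 1\}^n$ all of these equal either $-a$ or $-(1-a)$, hence are strictly negative; by continuity of eigenvalues the spectrum of $D_1 G(\gu_\gw(d);a,d)$ stays in the open left half-plane for $d$ small, so $\gu_\gw(d)$ is asymptotically stable (linearly, hence by the principle of linearized stability). Conversely, if $\gw\notin\{\mathfrak 0,\mathfrak 1\}^n$ then some $\gw_i=\mathfrak a$, so at $d=0$ the matrix has the eigenvalue $g'(a;a)=a(1-a)>0$; again by continuity $D_1 G(\gu_\gw(d);a,d)$ has an eigenvalue with positive real part for $d$ small, so $\gu_\gw(d)$ is unstable. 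Shrinking $d$ once more so that all $3^n$ spectral perturbation estimates hold simultaneously finishes the argument.

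The only mild subtlety — and the step I would flag as the main obstacle — is making the "$d$ small enough" uniform: the IFT gives each branch its own interval of existence and each spectral estimate its own threshold, and one must take the minimum over all $3^n$ labels $\gw$, which is harmless only because $A^n$ is finite. I would also be slightly careful that eigenvalues of the non-symmetric matrix $D_1 G$ (the perturbation $-dL$ is symmetric but $\mathrm{diag}(g'(\gu_i;a))$ at $d>0$ need not commute with $L$) still depend continuously on $d$; this follows from continuity of roots of the characteristic polynomial, and since at $d=0$ the relevant eigenvalues are simple-in-sign-pattern (bounded away from the imaginary axis), no Jordan-block or eigenvalue-collision issue interferes with the sign of the real parts. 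Everything else — positivity of $g'(a;a)$, negativity of $g'(0;a)$ and $g'(1;a)$, the invariant region $[0,1]^n$, and the translation back to \eqref{eq:lde} — is routine given the results already established.
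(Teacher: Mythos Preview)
Your proposal is correct and follows essentially the same approach as the paper: the implicit function theorem applied to the $3^n$ nondegenerate roots of $G(\cdot;a,0)$, transferred to \eqref{eq:lde} via Lemma~\ref{l:tw-equivalence}, together with the signs $g'(0;a)=-a<0$, $g'(1;a)=a-1<0$, $g'(a;a)=a(1-a)>0$ and spectral continuity in $d$. The paper's proof is a two-sentence sketch of exactly this argument; your version simply fills in the details (uniformity of the threshold for $d$ over the finite set $A^n$, continuity of eigenvalues, and the maximum-principle argument that rules out extraneous equilibria).
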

\begin{proof}
The existence of $3^n$ stationary $n$-periodic solutions 
follows from Lemma \ref{l:tw-equivalence} 
and the $3^n$ solution branches for  \eqref{eq:gde} 
supplied by the implicit function theorem.
The 
stability properties follow from 
\eqref{eq:mcr:diag:matrix:d1g} and the fact that $g'(0;a)=-a<0$, $g'(1;a)=a-1<0$ and $g'(a;a)=a(1-a)>0$.
\end{proof}

\noindent 
Moreover, pairs of stable solutions can be ordered if the corresponding words are ordered.

\begin{lem}\label{l:ordering}
Let $a\in(0,1)$ and $d>0$ be small enough and consider a distinct pair $\gw_A,\gw_B \in \{\mathfrak{0},\mathfrak{a},\mathfrak{1}\}^n$ with $(\gw_A)_i \le (\gw_B)_i$ for all $i$. Suppose furthermore that at least one of these two words is contained in $\{\mathfrak{0}, \mathfrak{1} \}^n$.
Then the solutions $u_{\gw_A}, u_{\gw_B}$ of \eqref{eq:lde} satisfy the strict component-wise inequality $(u_{\gw_A})_i < (u_{\gw_B})_i$, for all $i\in\mathbb{Z}$.
\end{lem}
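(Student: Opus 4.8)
The plan is to deduce the strict inequality from a combination of three ingredients: (i) the non-strict inequality that holds for the underlying roots at $d=0$, (ii) a comparison/monotonicity argument that propagates this inequality to small $d>0$, and (iii) a strong maximum principle that upgrades the non-strict inequality to a strict one, using the hypothesis that one of the two words lies in $\{\mathfrak 0,\mathfrak 1\}^n$ to guarantee a stable (hence hyperbolic and attracting) endpoint. Throughout I would work with the graph equation \eqref{eq:gde} on $\mathcal C_n$ via Lemma~\ref{l:tw-equivalence}, since the statement is really about the $n$-periodic pattern, and then transport the conclusion back to \eqref{eq:lde}.

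First I would set up the comparison. For $d$ small the branches $\gu_{\gw_A}(d)$ and $\gu_{\gw_B}(d)$ are the unique continuations of the roots $(\gw_A)_a \le (\gw_B)_a$ supplied by the implicit function theorem, and both are continuous in $d$ with $\gu_{\gw_A}(0)=(\gw_A)_a$, $\gu_{\gw_B}(0)=(\gw_B)_a$. I would consider the parabolic flow of \eqref{eq:gde} and use the comparison principle (the same one invoked in Lemma~\ref{l:tw-equivalence}, Chen et al.\ \cite[Lemma 1]{Chen2008}): since $\gu_{\gw_B}(d)$ is asymptotically stable when $\gw_B\in\{\mathfrak 0,\mathfrak 1\}^n$ (by Lemma~\ref{l:3n2n}), initial data slightly below it converge to it monotonically from below in an appropriate sense, and the ordered equilibrium $\gu_{\gw_A}(d)$ lies in its basin; comparison with the constant-in-time supersolution $\gu_{\gw_B}(d)$ then forces $\gu_{\gw_A}(d)\le \gu_{\gw_B}(d)$ componentwise. (If instead $\gw_A\in\{\mathfrak 0,\mathfrak 1\}^n$, I would run the symmetric argument from above using $\gu_{\gw_A}(d)$ as an attracting subsolution.) In fact the cleanest route may be to avoid the dynamics entirely: the difference $\gp := \gu_{\gw_B}(d)-\gu_{\gw_A}(d)$ satisfies a linear elliptic system $d L \gp + c(\cdot)\gp = 0$ on $\mathcal C_n$, where $L$ is the graph Laplacian and $c_i = \int_0^1 g'\big(\gu_{\gw_A,i}+s\gp_i;a\big)\,\mathrm ds$; since the sign of $\gp$ is already known to be non-negative at $d=0$ and varies continuously, the non-negativity persists for small $d$ by a connectedness-in-$d$ argument together with the strong maximum principle below.

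Next I would upgrade to strictness. Write $M := dL + \mathrm{diag}(c_i)$ acting on $\mathbb R^n$; because $\mathcal C_n$ is connected, $dL$ is irreducible, and adding a diagonal matrix keeps $M - \lambda I$ irreducible, so $M$ satisfies a strong maximum principle: a non-negative vector $\gp$ in the kernel of $M$ (equivalently $dL\gp = -\mathrm{diag}(c_i)\gp$, i.e.\ $(M - \lambda I)\gp = -\lambda\gp$ for suitable $\lambda$) that vanishes at some vertex must vanish identically, provided the zeroth-order coefficients are bounded — which holds since $g'$ is continuous and the solutions lie in the compact set $[0,1]^n$. Since $\gw_A \ne \gw_B$, the roots differ at $d=0$, so $\gp \not\equiv 0$ for small $d$; hence $\gp_i > 0$ at every vertex, which is exactly the strict componentwise inequality $(\gu_{\gw_A})_i < (\gu_{\gw_B})_i$. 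Translating back through the periodic extension of Lemma~\ref{l:tw-equivalence} gives $(u_{\gw_A})_i < (u_{\gw_B})_i$ for all $i \in \mathbb Z$.

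The main obstacle, I expect, is making the persistence of the \emph{non-strict} inequality for $d>0$ fully rigorous: at $d=0$ the two equilibria already satisfy $\gp \ge 0$, but a priori some components of $\gp$ could be zero at $d=0$ (precisely those where $\gw_A$ and $\gw_B$ agree), and one must rule out that $\gp$ develops a negative component as $d$ increases. This is where the hypothesis that one word is in $\{\mathfrak 0,\mathfrak 1\}^n$ is essential — it provides the attracting/ordered structure (via Lemma~\ref{l:3n2n} and the comparison principle) that pins the sign, whereas for two type-$\mathfrak a$ words the unstable equilibrium $\gu_\gw$ need not be comparable to neighbours. I would handle this either by the dynamical sandwiching argument (comparison principle with the stable equilibrium as a barrier) or, more quantitatively, by noting that on the set where $\gw_B$ has a $\mathfrak 1$ (resp.\ $\mathfrak 0$) and $\gw_A$ has a strictly smaller letter, $\gp$ starts strictly positive and stays so by continuity for $d$ small, and then the strong maximum principle spreads positivity to the remaining vertices along the connected cycle.
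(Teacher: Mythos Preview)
The paper's own proof is a one-line citation: it invokes \cite[Lemma~5.2]{mchrom} together with Lemma~\ref{l:tw-equivalence}. So there is no in-paper argument to compare against, and your self-contained approach is necessarily a different route. Your overall architecture---reduce to \eqref{eq:gde} on $\mathcal C_n$, write $\gp=\gu_{\gw_B}-\gu_{\gw_A}$ as a solution of $dL\gp+\mathrm{diag}(c_i)\gp=0$, and use the strong maximum principle on the connected cycle to upgrade $\gp\ge 0$ to $\gp>0$---is sound, and your SMP step is correct as written.

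The genuine gap is in establishing $\gp\ge 0$ for small $d>0$; both of your proposed mechanisms fail as stated. In the dynamical route, the sentence ``the ordered equilibrium $\gu_{\gw_A}(d)$ lies in its basin'' is false: $\gu_{\gw_A}(d)$ is itself a rest point, so it is not attracted to $\gu_{\gw_B}(d)$, and the comparison principle cannot be invoked without already knowing the very ordering you want to prove (the ``supersolution'' $\gu_{\gw_B}(d)$ is only a barrier for trajectories that start below it). In the second route, the phrase ``the strong maximum principle spreads positivity to the remaining vertices'' is circular: the SMP you stated requires $\gp\ge 0$ globally as a hypothesis, so it cannot be used to deduce non-negativity on the set $I_0=\{i:(\gw_A)_i=(\gw_B)_i\}$ from positivity on its complement $I_1$.

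The fix is close to what you wrote but uses a \emph{minimum} principle together with the sign of the zeroth-order coefficient, and this is exactly where the hypothesis enters. For $d$ small enough one has simultaneously (i) $\gp_i(d)>0$ for all $i\in I_1$ (continuity from $d=0$), and (ii) $c_i(d)<0$ for all $i\in I_0$, because on $I_0$ the common letter lies in $\{\mathfrak 0,\mathfrak 1\}$ by assumption and hence $c_i(0)=g'\big((\gw_A)_{a,i};a\big)<0$. Now suppose $\min_j\gp_j(d)<0$, attained at $i^\ast$. Then $i^\ast\in I_0$ by (i), and the $i^\ast$-th equation reads
\[
d\big(\gp_{i^\ast-1}+\gp_{i^\ast+1}-2\gp_{i^\ast}\big)+c_{i^\ast}\gp_{i^\ast}=0,
\]
where the first term is $\ge 0$ (minimum point) and the second is $>0$ by (ii), a contradiction. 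Hence $\gp\ge 0$, and then your SMP gives $\gp>0$. This argument makes transparent why the hypothesis is needed and why it fails if both words may share a letter $\mathfrak a$: then $c_{i^\ast}(0)=g'(a;a)>0$ and the contradiction evaporates.
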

\begin{proof}
The proof follows from \cite[Lemma 5.2]{mchrom} and Lemma \ref{l:tw-equivalence}.
\end{proof}

\begin{figure}
\centering
\begin{minipage}{.4\textwidth}
    \centering
    \includegraphics[width=\textwidth]{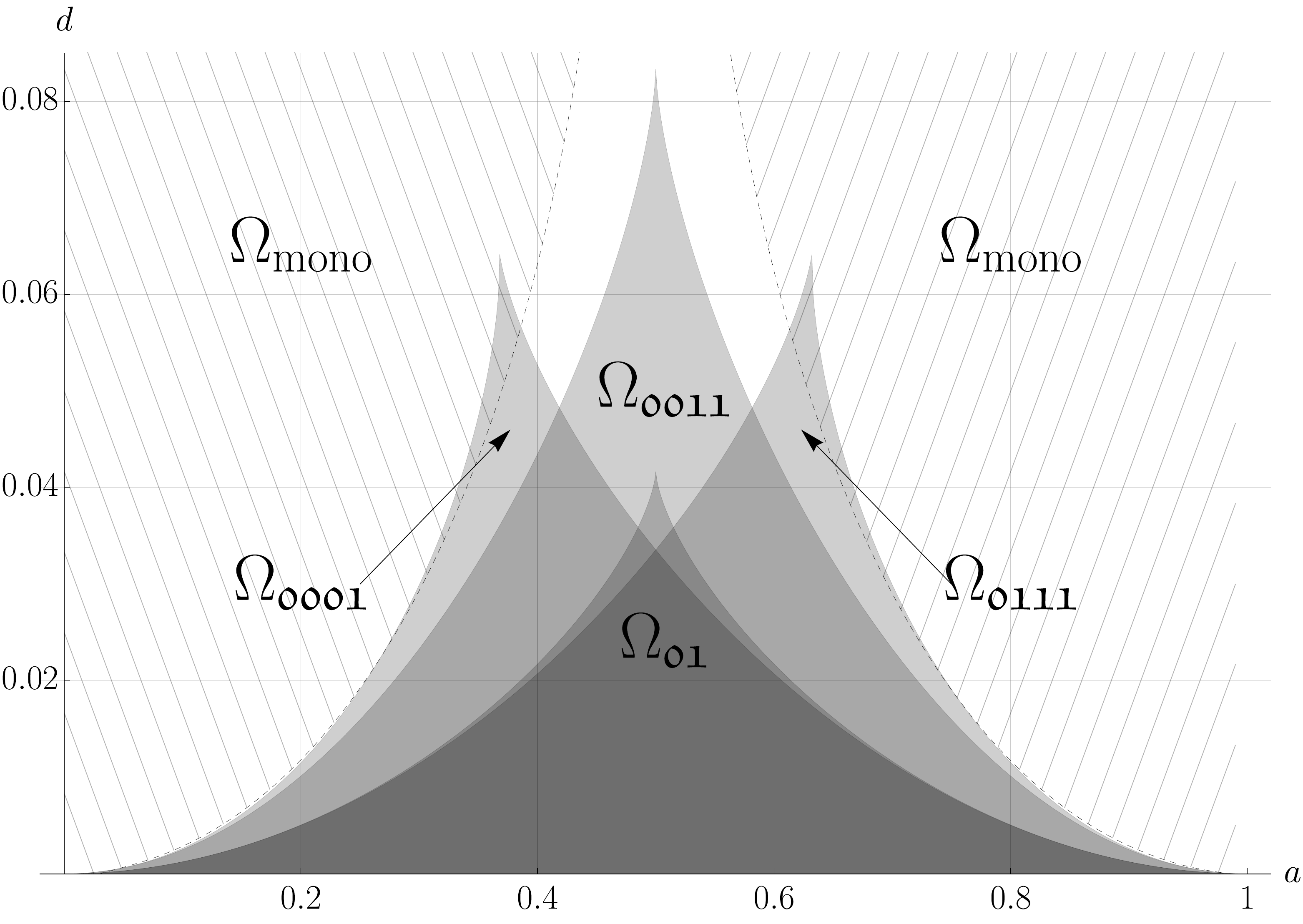}
    \caption{Regions $\Omega_\gw$ defined in \eqref{eq:Omega} corresponding to asymptotically stable spatially heterogeneous $4$-periodic solutions of \eqref{eq:lde}: $u_\mathfrak{0001}, u_\mathfrak{0011}, u_\mathfrak{01}, u_\mathfrak{0111}$. The hatched regions $\Omega_{\text{mono}}$ correspond to pairs $(a,d)$ for which the monochromatic waves of \eqref{eq:lde} travel.}
    \label{f:regions:4}
\end{minipage}\quad
\begin{minipage}{.4\textwidth}
    \centering
    \includegraphics[width=\textwidth]{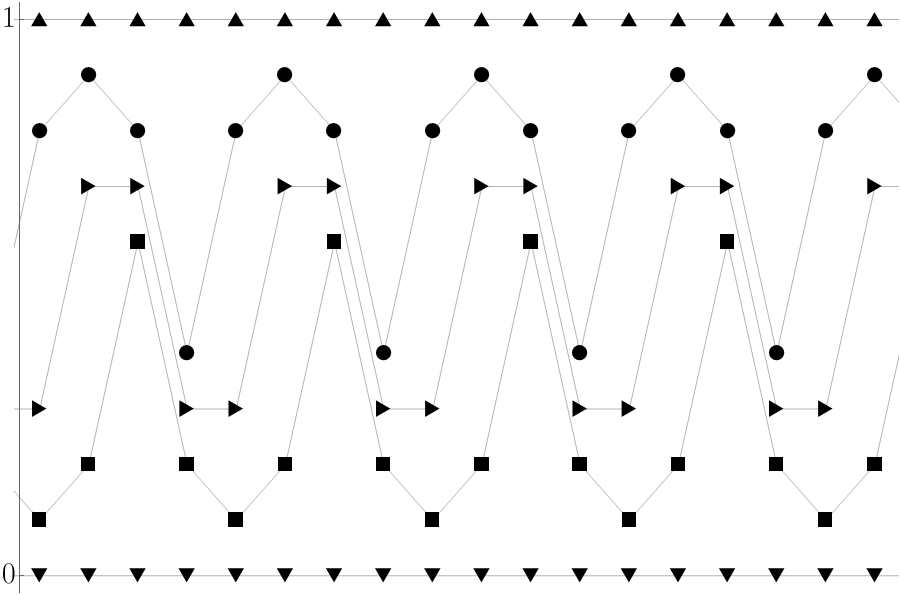}
    \caption{Lyndon representatives of 5 ordered classes of asymptotically stable stationary $4$-periodic solutions of \eqref{eq:lde}: \protect\includegraphics[width=2mm]{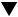} - $u_\mathfrak{0}$, \protect\includegraphics[width=2mm]{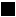} - $u_\mathfrak{0001}$,
    \protect\includegraphics[width=2mm]{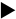} - $u_\mathfrak{0011}$,
    \protect\includegraphics[width=2mm]{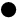} - $u_\mathfrak{0111}$,
    \protect\includegraphics[width=2mm]{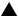} - $u_\mathfrak{1}$ (the values are slightly modified for better visualisation). 
    }
    \label{f:4periodic}
\end{minipage}
\end{figure}


\section{Symmetries of stationary periodic solutions}\label{sec:symmetries}
\noindent The naming scheme introduced above allows us to study two key symmetries among the $3^n$ stationary solutions of  \eqref{eq:gde} and the corresponding $n$-periodic stationary solutions of \eqref{eq:lde}.
\paragraph{Translation (rotation)} If $(u_i)$ is an $n$-periodic stationary solution of \eqref{eq:lde} then this also holds for $(u_{i+k})$. 
We define the translation (rotation) operator on words (and more generally on any vectors of length $n$) by
\[
\left(\mathcal{T}_\ell \gw\right)_i := \gw_{\mod(i+\ell,n)}.
\]
\paragraph{Reflection} If $(u_i)$ is an $n$-periodic stationary solution of \eqref{eq:lde} then the same is true for  $(u_{1-i})$. We define the reflection operator by
\[
\left(\mathcal{R} \gw\right)_i := \gw_{\mod(1-i,n)}.
\]

Trivially, if $\gu$ is a solution of type $\gw$ 
for $G(\gu; a, d) = 0 $ then 
$\mathcal{T}_\ell\gu$ is a solution of type $\mathcal{T}_\ell\gw$  and $\mathcal{R}\gu$ is a solution of type $\mathcal{R}\gw$, which immediately implies that
\[
\Omega_{\gw} = \Omega_{\mathcal{T}_1 \gw} =  \ldots = \Omega_{\mathcal{T}_{n-1} \gw } = \Omega_{\mathcal{R} \gw }.
\]

Naturally, other symmetries can be considered as well. For example, in the case of $a=1/2$, it makes sense to consider symbol swapping  $\mathfrak{0} \leftrightarrow \mathfrak{1}$. However, 
the existence of solution of type $\gw$ for a given pair $(a,d)$ does not imply the existence of solutions for the ``swapped'' word $\tilde{\gw}$ for general $a\neq 1/2$, see Fig.~\ref{f:regions}. Therefore, we only focus on the simplest symmetries - translations $\mathcal{T}_\ell$ and reflections $\mathcal{R}$.



\section{Counting equivalence classes of stationary periodic solutions}\label{sec:counting}
\noindent We can now define equivalence classes of $n$-periodic solutions to \eqref{eq:lde} by factoring out one or both of the symmetries discussed above. If we consider translations $\mathcal{T}_\ell$ and the word $\gw=\mathfrak{00a1}$ we have the following equivalence class of stationary $4$-periodic solutions of \eqref{eq:lde}:
\[
 \left[ u_\mathfrak{00a1}\right]_\mathcal{T} = \{u_\mathfrak{00a1}, \mathcal{T}_1u_\mathfrak{00a1}, \mathcal{T}_2 u_\mathfrak{00a1}, \mathcal{T}_3 u_\mathfrak{00a1} \}=\{u_\mathfrak{00a1}, u_\mathfrak{0a10}, u_\mathfrak{a100}, u_\mathfrak{100a} \}.
\]
If we consider both translations $\mathcal{T}_\ell$ and reflections $\mathcal{R}$, we have for example $\mathcal{R}u_\mathfrak{00a1} =  u_\mathfrak{1a00}$ and thus
\[
 \left[ u_\mathfrak{00a1}\right]_\mathcal{TR} = \{u_\mathfrak{00a1}, u_\mathfrak{0a10}, u_\mathfrak{a100}, u_\mathfrak{100a}, u_\mathfrak{1a00}, u_\mathfrak{01a0}, u_\mathfrak{001a}, u_\mathfrak{a001} \}.
\]
We always use the smallest word in the lexicographical sense (the so called Lyndon word) as a class representative.
If the word is not primitive (i.e., it is periodic itself), we take the primitive (aperiodic) subword, e.g., $[\mathfrak{01}]_\mathcal{T}= [\mathfrak{0101}]_\mathcal{T}$.

If one is simply interested in the periodic solutions themselves then it is reasonable to factor out both translation and reflection symmetries and consider the full alphabet $\{\mathfrak{0}, \mathfrak{a}, \mathfrak{1} \}$.  However, in special circumstances (e.g., when connecting these solutions via multichromatic waves \cite{mchrom}) it only makes sense to factor out the translation symmetries
and to consider the reduced alphabet $\{\mathfrak{0}, \mathfrak{1} \}$.


We can now use the combinatorial theory of words \cite{Lothaire1997, Sawada2001} to count the equivalence classes and to get the number of qualitatively different $n$-periodic stationary solutions. 
To this end, we 
define the 
quantities
\begin{align}
N_k(n)&=\frac{1}{n}\sum\limits_{d: d|n} \varphi(d) k^{\frac{n}{d}}, \quad L_k(n)=\frac{1}{n}\sum\limits_{d: d|n} \mu(d) k^{\frac{n}{d}},\label{eq:NknLkn}\\
B_k(n)&=\begin{cases}
\frac{1}{2} \left( N_k(n) + \frac{k+1}{2} k^{n/2}\right) & \text{for } n \text{ even},\\
\frac{1}{2} \left( N_k(n) + k^{(n+1)/2}\right) & \text{for } n \text{ odd},
\end{cases} \label{eq:Bkn}\\
BL_k(n) &= \sum\limits_{d: d|n} \mu(d) B_k(n/d), \label{eq:BLkn}
\end{align}
where $ \varphi(d)$ is the Euler's totient function and $\mu(d)$ is the M\"{o}bius function \cite[Chapter 2]{Apostol1998}. We can use these  quantities to formulate our main result, which describes the number of equivalence classes of $n$-periodic stationary solutions.

\begin{thm}\label{t:counting}
Pick an integer $n\geq 2$ and a parameter $a\in(0,1)$.
Then for $d>0$ small enough \eqref{eq:lde} has exactly
\begin{enumerate}
\item $3^n$ $n$-periodic stationary solutions which form
	\begin{enumerate}
	\item $N_3(n)$ equivalence classes with respect to translations $\mathcal{T}_\ell$. Moreover, $L_3(n)$ of these equivalence classes are formed by primitive periodic solutions.
	\item $B_3(n)$ equivalence classes with respect to translations $\mathcal{T}_\ell$ and reflections $\mathcal{R}$.  Moreover, $BL_3(n)$ of these equivalence classes are formed by primitive periodic solutions.	
	\end{enumerate}

\item $2^n$ asymptotically stable $n$-periodic stationary solutions which form
	\begin{enumerate}
	\item $N_2(n)$ equivalence classes with respect to translations $\mathcal{T}_\ell$. Moreover, $L_2(n)$ of these equivalence classes are formed by primitive periodic solutions.
	\item $B_2(n)$ equivalence classes with respect to translations $\mathcal{T}_\ell$ and reflections $\mathcal{R}$.  Moreover, $BL_2(n)$ of these equivalence classes are formed by primitive periodic solutions.	
	\end{enumerate}
\end{enumerate}
\end{thm}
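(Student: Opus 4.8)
The plan is to transfer the counting problem entirely to the combinatorics of words over the alphabet $A = \{\mathfrak{0},\mathfrak{a},\mathfrak{1}\}$ (respectively $\{\mathfrak{0},\mathfrak{1}\}$), using the naming scheme established in $\S$\ref{sec:naming} together with the symmetry action described in $\S$\ref{sec:symmetries}. First I would record the two basic facts already proven: by Lemma~\ref{l:3n2n}, for $a\in(0,1)$ and $d>0$ small the $n$-periodic stationary solutions of \eqref{eq:lde} are in bijection with $A^n$ via $\gw \mapsto u_\gw$, and the asymptotically stable ones correspond exactly to $\{\mathfrak{0},\mathfrak{1}\}^n$. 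Next, from $\S$\ref{sec:symmetries}, the translation operators $\mathcal{T}_\ell$ and the reflection $\mathcal{R}$ act on solutions compatibly with their action on types, i.e. $\mathcal{T}_\ell u_\gw = u_{\mathcal{T}_\ell\gw}$ and $\mathcal{R} u_\gw = u_{\mathcal{R}\gw}$; crucially these identities hold as genuine equalities of solutions (not merely of types), because a solution of \eqref{eq:gde} of a given type is \emph{unique} for $d$ small — it is the one continued from the corresponding root at $d=0$ by the implicit function theorem. Hence the equivalence classes of $n$-periodic stationary solutions under $\langle \mathcal{T}_\ell\rangle$ (resp. $\langle \mathcal{T}_\ell,\mathcal{R}\rangle$) are in bijection with the orbits of $A^n$ under the cyclic group $C_n$ (resp. the dihedral group $D_n$) acting by rotation and reflection of words.

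With this reduction in place, the counts are classical. The number of $C_n$-orbits on $k$-ary words of length $n$ is the number of $k$-ary \emph{necklaces}, which Burnside/P\'olya gives as $N_k(n) = \frac1n\sum_{d\mid n}\varphi(d)\,k^{n/d}$, matching \eqref{eq:NknLkn}; the orbits consisting of \emph{aperiodic} (primitive) words are counted by the number of Lyndon words $L_k(n) = \frac1n\sum_{d\mid n}\mu(d)\,k^{n/d}$ via M\"obius inversion of $n\mapsto k^n = \sum_{d\mid n} d\,L_k(d)$. Likewise the number of $D_n$-orbits is the number of $k$-ary \emph{bracelets} $B_k(n)$, obtained by averaging over the $n$ reflections in $D_n$ (the case split in \eqref{eq:Bkn} reflecting whether reflection axes pass through $0$, $1$, or $2$ vertices depending on the parity of $n$), and the primitive bracelets are counted by $BL_k(n) = \sum_{d\mid n}\mu(d)\,B_k(n/d)$, again by M\"obius inversion. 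I would cite \cite{Lothaire1997, Sawada2001} for these formulas and simply verify that the expressions in \eqref{eq:NknLkn}--\eqref{eq:BLkn} are exactly the standard ones. Applying them with $k=3$ on $A^n$ gives part~(1), and with $k=2$ on $\{\mathfrak{0},\mathfrak{1}\}^n$ — which is a union of $C_n$- and $D_n$-orbits by Lemma~\ref{l:3n2n} and the stability characterization — gives part~(2).

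One subtlety I would address carefully is the meaning of ``equivalence class formed by primitive periodic solutions'': a class $[u_\gw]$ should be counted among the $L_k(n)$ (resp. $BL_k(n)$) precisely when $\gw$ is aperiodic as a word of length $n$, equivalently when the solution $u_\gw$ has minimal period exactly $n$ rather than a proper divisor. This is consistent with the convention announced after Lemma~\ref{l:ordering} that a non-primitive word is identified with its primitive subword, so that, e.g., $[u_\mathfrak{0101}]_\mathcal{T}$ is the same class as $[u_\mathfrak{01}]_\mathcal{T}$ and is counted at level $n=2$, not $n=4$. I would make one remark confirming that the solution $u_\gw$ of \eqref{eq:lde} indeed has minimal spatial period $n/d_0$ when $\gw$ has minimal word-period $n/d_0$ — this follows from the uniqueness of the continued branch, since a $\gw$ with period $m\mid n$ yields an $m$-periodic solution which, by Lemma~\ref{l:tw-equivalence} applied on $\mathcal{C}_m$ and the bijection of Lemma~\ref{l:3n2n}, coincides with $u_{\gw'}$ for the length-$m$ primitive subword $\gw'$.

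\textbf{Main obstacle.} The analytic content is light — it is entirely carried by Lemmas~\ref{l:tw-equivalence}--\ref{l:3n2n} — so the real work is bookkeeping: being precise that the symmetry operators descend to the \emph{solution set} (not just the type labels) via branch-uniqueness for small $d$, and then correctly invoking the necklace/bracelet/Lyndon enumeration, in particular getting the parity case split in $B_k(n)$ and the M\"obius inversions right. I expect the parity analysis of the dihedral reflections (the $\frac{k+1}{2}k^{n/2}$ versus $k^{(n+1)/2}$ terms) to be the fiddliest point to state cleanly, though it is standard.
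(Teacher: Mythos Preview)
Your proposal is correct and follows essentially the same route as the paper's own proof: reduce, via Lemma~\ref{l:3n2n} and the symmetry compatibility of $\S$\ref{sec:symmetries}, to counting $C_n$- and $D_n$-orbits on $k$-ary words, then quote the standard necklace/Lyndon/bracelet formulas \eqref{eq:NknLkn}--\eqref{eq:BLkn} from \cite{Sawada2001} and invoke M\"obius inversion for $BL_k(n)$. If anything, you are more careful than the paper in spelling out why branch-uniqueness is needed for the symmetry action to descend to the solution set and in handling the minimal-period subtlety for primitive classes.
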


\begin{proof}
A $k$-ary necklace of length $n$ is an equivalence class of words of length $n$ formed by $k$ letters which are equivalent with respect to translations (rotations) $\mathcal{T}_\ell$. There are $N_k(n)$ different necklaces \cite[Eq. (2.1)]{Sawada2001}. There are $L_k(n)$ distinct primitive (aperiodic) necklaces -- Lyndon words 
\cite[Eq. (2.2)]{Sawada2001}. 

A $k$-ary bracelet of length $n$ is an equivalence class of words of length $n$ formed by $k$ letters which are equivalent with respect to translations (rotations) $\mathcal{T}_\ell$ and refection $\mathcal{R}$. There are $B_k(n)$ different bracelets \cite[Eq. (2.4)]{Sawada2001}. A primitive (aperiodic) bracelet is called a Lyndon bracelet. There are $BL_k(n)$ distinct Lyndon bracelets, which can be proved by the direct application of the M\"{o}bius inversion formula (e.g., \cite[Theorem 2.9]{Apostol1998}). The result for $n$-periodic stationary solutions of \eqref{eq:lde} then follows directly from Lemma \ref{l:3n2n}.
\end{proof}

Table \ref{tab:number:of:words} provides a summary of these results for small periods. As an example, we give a detailed
description of the equivalence classes for $n=3$ and $n=4$.

\begin{table}[t]
\begin{center}
\begin{tabular}{ccccccc}
\hline \hline
&& \multicolumn{2}{c}{translation $\mathcal{T}$} && \multicolumn{2}{c}{translation $\mathcal{T}$+refection $\mathcal{R}$ } \\ \cline{3-4} \cline{6-7}
Period & All solutions & All & Primitive && All & Primitive \\
$n$ & $k^n$ & $N_k(n)$  & $L_k(n)$  && $B_k(n)$  & $BL_k(n)$  \\ \hline \hline
1 & \tabEntry{3}{2} & \tabEntry{3}{2} & \tabEntry{3}{2} && \tabEntry{3}{2} & \tabEntry{3}{2} \\
2 & \tabEntry{9}{4} & \tabEntry{6}{3} & \tabEntry{3}{1} && \tabEntry{6}{3} & \tabEntry{3}{1} \\
3 & \tabEntry{27}{8} & \tabEntry{11}{4} & \tabEntry{8}{2} && \tabEntry{10}{4} & \tabEntry{7}{2} \\
4 & \tabEntry{81}{16} & \tabEntry{24}{6} & \tabEntry{18}{3} && \tabEntry{21}{6} & \tabEntry{15}{3} \\
5 & \tabEntry{243}{32} & \tabEntry{51}{8} & \tabEntry{48}{6} && \tabEntry{39}{8} & \tabEntry{36}{6} \\
6 & \tabEntry{729}{64} & \tabEntry{130}{14} & \tabEntry{116}{9} && \tabEntry{92}{13} & \tabEntry{79}{8} \\
\ldots \\ \hline\hline
\end{tabular}
\end{center}
\caption{Number of equivalence classes of $n$-periodic stationary solutions of \eqref{eq:lde}. In each pair, the former number corresponds to all equivalence classes and the latter number in the parentheses to the equivalence classes formed by asymptotically stable solutions.}\label{tab:number:of:words}
\end{table}

\begin{exmp}
There are 27 distinct 3-periodic stationary solutions of \eqref{eq:lde}. Considering translations $\mathcal{T}_\ell$, they form 11 equivalence classes:
\begin{flalign*}
[u_\mathfrak{0}]_\mathcal{T} &= \{ u_\mathfrak{0} \}, \, 
[u_\mathfrak{a}]_\mathcal{T}=\{ u_\mathfrak{a} \}, \, 
[u_\mathfrak{1}]_\mathcal{T} = \{ u_\mathfrak{1}\}; \,\\
[u_\mathfrak{00a}]_\mathcal{T} &= \{ u_\mathfrak{00a}, u_\mathfrak{0a0}, u_\mathfrak{a00} \}, \, 
[u_\mathfrak{001}]_\mathcal{T} = \{ u_\mathfrak{001}, u_\mathfrak{010}, u_\mathfrak{100} \}, \,
[u_\mathfrak{0aa}]_\mathcal{T} = \{ u_\mathfrak{0aa}, u_\mathfrak{aa0}, u_\mathfrak{a0a} \}, \,
[u_\mathfrak{0a1}]_\mathcal{T} = \{ u_\mathfrak{0a1}, u_\mathfrak{a10}, u_\mathfrak{10a} \}, \,\\ 
[u_\mathfrak{01a}]_\mathcal{T} &= \{ u_\mathfrak{01a}, u_\mathfrak{1a0}, u_\mathfrak{a01} \}, \, 
[u_\mathfrak{011}]_\mathcal{T} = \{ u_\mathfrak{011}, u_\mathfrak{110}, u_\mathfrak{101} \}, \,
[u_\mathfrak{aa1}]_\mathcal{T} = \{ u_\mathfrak{aa1}, u_\mathfrak{a1a}, u_\mathfrak{1aa} \}, \,
[u_\mathfrak{a11}]_\mathcal{T} = \{ u_\mathfrak{a11}, u_\mathfrak{11a}, u_\mathfrak{1a1} \}. 
\end{flalign*}
The former 3 classes correspond to constant solutions and are thus not primitive $3$-periodic solutions, while the remaining 8 classes correspond to primitive stationary $3$-periodic solutions. 
Upon taking  
reflections $\mathcal{R}$ into 
consideration, there are only 10 equivalence classes and 7 corresponding to primitive periodic solutions, since $[u_{\mathfrak{0a1}}]_\mathcal{T}$ and $[u_{\mathfrak{01a}}]_\mathcal{T}$ form one equivalence class
\[
[u_{\mathfrak{0a1}}]_\mathcal{TR}= \{u_\mathfrak{0a1}, u_\mathfrak{a10}, u_\mathfrak{10a} ,u_{\mathfrak{1a0}},u_{\mathfrak{01a}},u_{\mathfrak{a01}} \}.
\]
The 8 asymptotically stable solutions form 4 equivalence classes with respect to translations and 2 are primitive -  $[u_{\mathfrak{001}}]_\mathcal{T}$ and $[u_{\mathfrak{011}}]_\mathcal{T}$. These classes are not affected by the reflection $\mathcal{R}$ and can be ordered  as
\[
[u_{\mathfrak{0}}]_\mathcal{T} \triangleleft [u_{\mathfrak{001}}]_\mathcal{T} \triangleleft [u_{\mathfrak{011}}]_\mathcal{T}  \triangleleft [u_{\mathfrak{1}}]_\mathcal{T}.
\]
where $[u_{\gw_A}] \triangleleft [u_{\gw_B}]$ means that the Lyndon representatives satisfy $u_{\gw_A}<u_{\gw_B}$.

The ordering of asymptotically stable $n$-periodic solutions is only partial for periods with $n>3$. For example, Lemma \ref{l:ordering} implies that 6 equivalence classes of $4$-periodic solutions can be partially ordered in the following way.

\begin{center}
\includegraphics[width=0.4\textwidth]{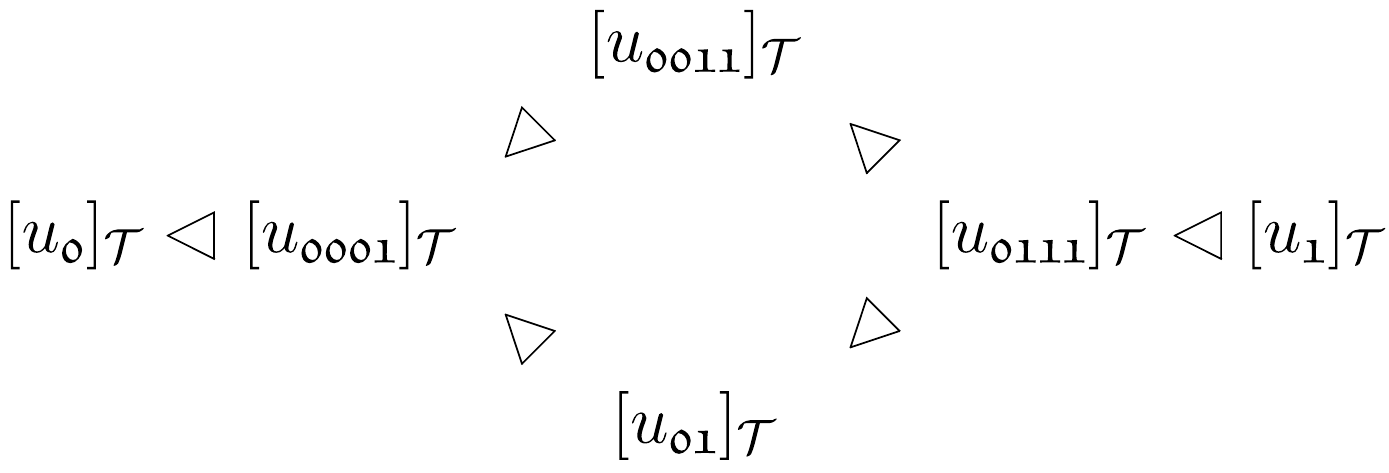}
\end{center}
\noindent The existence regions of the four spatially heterogeneous equivalence classes are depicted in Fig.~\ref{f:regions:4} and the respective Lyndon representatives of five classes corresponding to the ``upper path'' in this diagram are sketched in Fig.~\ref{f:4periodic}.
\end{exmp}

\begin{rmk}\label{r:asymptotics}
Finally, let us note that analyzing the sums in \eqref{eq:NknLkn}-\eqref{eq:BLkn} we arrive to the asymptotic estimates
\[
N_k(n) \sim L_k(n) \sim \dfrac{k^n}{n} \text{, and } B_k(n) \sim BL_k(n) \sim \dfrac{k^n}{2n} \text{ as } n\rightarrow \infty.
\]
\end{rmk}

\noindent\textbf{Acknowledgements.} HJH and LM acknowledge support from the Netherlands Organization for Scientific Research (NWO) (grants 639.032.612, 613.001.304). PS and V\v{S} acknowledge the support of the project LO1506 of the Czech Ministry of Education, Youth and Sports under the program NPU I.

\footnotesize
\bibliographystyle{klunumHJ}

\end{document}